\documentclass[a4paper]{amsart}
\usepackage{amsfonts,amssymb,amscd,amsmath,enumerate,verbatim,newlfont,calc,}
\usepackage{amsmath}
\usepackage{amsmath, pb-diagram}
\usepackage{amsthm}
\usepackage{amsfonts}
\usepackage{amssymb}
\usepackage{latexsym}
\usepackage[all]{xy}

\theoremstyle{plain}

\newtheorem{thm}{Theorem}[section]

\newtheorem{lem}[thm]{Lemma}
\newtheorem{cor}[thm]{Corollary}

\theoremstyle{definition}
\newtheorem{defn}[thm]{Definition}
\newtheorem{exm}[thm]{Example}
\newtheorem{rem}[thm]{Remark}

\def\z*{\mbox{Z$^*$}}
\def\ZM*{\mbox{Z$_M^*$}}
\def\-z{\mbox{$\overline{Z}_M$}}
\def\s*{\mbox{(S$^*$)}}



\begin{document}

\title{Generalized injectivity and approximations}
\author{Serap \c{S}ahinkaya}
\address{Department of Mathematics, Gebze Institute of Technology, 41400 Gebze/Kocaeli,  Turkey}
\email{ssahinkaya@gyte.edu.tr}
\author{Jan Trlifaj}
\address{Department of Algebra, Charles University in Prague, Faculty of Mathematics and Physics Sokolovsk\' a 83, 186 75 Praha 8, Czech Republic}
\email{trlifaj@karlin.mff.cuni.cz}
\subjclass[2010]{Primary: 16D50. Secondary: 18G25, 16D70.}
\keywords{Generalized injective modules, module approximations, pure-injective modules.}
\thanks{\c{S}ahinkaya was supported by grant SVV-2014-260107, and Trlifaj by GA\v CR 14-15479S}
\dedicatory{Dedicated to Alberto Facchini on his 60th birthday}
\begin{abstract} Injective, pure-injective and fp-injective modules are well known to provide for approximations in the category Mod-$R$ for an arbitrary ring $R$. We prove that this fails for many other generalizations of injectivity: the $C_1$, $C_2$, $C_3$, quasi-continuous, continuous, and quasi-injective modules. We show that, except for the class of all $C_1$-modules, each of the latter classes provides for approximations only when it coincides with the injectives (for quasi-injective modules, this forces $R$ to be a right noetherian V-ring, in the other cases, $R$ even has to be semisimple artinian). The class of all $C_1$-modules over a right noetherian ring $R$ is (pre)enveloping, iff $R$ is a certain right artinian ring of Loewy length $\leq 2$; in this case, however, $R$ may have an arbitrary representation type.
\end{abstract}
\date{\today}
\maketitle

\section{Introduction}

The importance of injective modules in algebra is based on the following two facts: their structure is known for many classes of rings, and each module has a unique injective envelope. Thus, minimal injective coresolutions exist and yield important homological invariants of modules, such as the Bass invariants \cite[\S 9.2]{EJ}.

It is easy to see that a module $I$ is injective, if and only if $I$ is {\em pure-injective} (i.e., each homomorphism $f : N \to I$ from a pure submodule $N$ of a module $M$ extends to $M$) and $I$ is {\em fp-injective} (i.e., $\hbox{Ext}^1_R(F,I) = 0$ for each finitely presented module $F$). These two more general notions of injectivity also fit well in approximation theory: pure-injective modules provide for envelopes (though they are not closed under extensions in general), and the fp-injective modules for special preenvelopes (though fp-injective envelopes need not exist in general, see \cite[14.62]{GT}).

There are other generalizations of injectivity; here, we will consider the ones studied in \cite[Chapter 2]{MM}:
\begin{defn} \label{MoMu} {\rm Let $R$ be a ring and $M$ a module. Then \\
$M$ is a {\em $C_1$-module} provided that every submodule of $M$ is essential in a direct summand of $M$;\\
$M$ is a {\em $C_2$-module} provided that $A$ is a direct summand in $M$ whenever $A$ is a submodule of $M$ such that $A$ isomorphic to a direct summand in $M$;\\
$M$ is a {\em $C_3$-module} in case the following holds true: if $A$ and $B$ are direct summands in $M$ and $A \cap B = 0$, then $A + B$ is a direct summand in $M$.}
\end{defn}

$C_1$-modules are also called {\em extending} or {\em $CS$-modules}. Clearly, each uniform module is $C_1$.

It is easy to see that each $C_2$-module is also a $C_3$-module. Conversely, for each module $M$, if $M \oplus M$ is a $C_3$-module, then $M$ is a $C_2$-module. However, $C_3$ is a weaker property in general: if $R$ is any integral domain which is not a field, then $R$ is $C_3$, but not $C_2$.

\begin{defn} \label{more} {\rm A module $M$ is \emph{quasi-injective} in case each homomorphism $g : N \to M$ from a submodule $N$ of $M$ extends to $M$. It is easy to see that $M$ is quasi-injective iff $M$ is a fully invariant submodule of its injective hull. For example, each semisimple module is quasi-injective.

A module $M$ is \emph{continuous}, if $M$ is both $C_1$ and $C_2$; $M$ is \emph{quasi-continuous} if $M$ is both $C_1$ and $C_3$.}
\end{defn}

The following chain of implications is well known and easy to prove for any module $M$: $M$ is injective $\Rightarrow$ $M$ is quasi-injective $\Rightarrow$ $M$ is continuous $\Rightarrow$ $M$ is quasi-continuous.

In order to simplify our notation, we let $\mathcal C _i$ denote the class of all $C_i$-modules for $i = 1,2,3$. Moreover, $\mathcal C _4$, $\mathcal C _5$, and $\mathcal C _6$ will denote the classes of all quasi-continuous, continuous, and quasi-injective modules, respectively. Thus, we have the following inclusions
$$(\clubsuit) \quad \mathcal C _2 \subseteq \mathcal C _3 \quad \hbox{and} \quad \mathcal C_6 \subseteq \mathcal C _5 = \mathcal C _{1} \cap \mathcal C _{2} \subseteq \mathcal C _4 = \mathcal C _{1} \cap \mathcal C _{3} \subseteq \mathcal C _3.$$

\medskip
It is natural to ask whether these classes $\mathcal C _i$ of generalized injective modules provide for envelopes or preenvelopes. Our main goal here is to show that in contrast with the classes of all pure-injective and fp-injective modules, the classes $\mathcal C _i$ rarely provide for approximations in Mod-$R$, and analyze these rare cases in detail.

\section{Preliminaries}

We start with recalling basics from the approximation theory of modules.

Let $\mathcal C$ be a class of (right $R$-) modules. A homomorphism $g: M \rightarrow E$ is a {\em $\mathcal C$-preenvelope} (or a {\em left $\mathcal C$-approximation}) of a module $M$, provided that $E \in \mathcal C$ and each diagram
$$\begin{diagram}
 \node{M}\arrow{s,t}{g'}\arrow{e,t}{g} \node{E}\arrow{sw,t,..}{\alpha}\\
 \node[1]{E ^\prime}
 \end{diagram}$$
with $E^\prime \in \mathcal C$ can be completed by $\alpha : E \to E^\prime$ to a commutative diagram. If moreover the diagram
$$\begin{diagram}
 \node{M}\arrow{s,t}{g}\arrow{e,t}{g} \node{E}\arrow{sw,t,..}{\alpha}\\
 \node[1]{E}
 \end{diagram}$$
can be completed only by an automorphism $\alpha$, we call $g$ a {\em $\mathcal C$-envelope} (or a {\em minimal left $\mathcal C$-approximation}) of $M$. It is easy to see that the $\mathcal C$-envelope is unique up to isomorphism. If each module has a $\mathcal C$-(pre)envelope, then $\mathcal C$ is called a \emph{(pre)enveloping} class of modules.

For example, if $\mathcal C$ is the class of all injective modules, then $\mathcal C$ is enveloping: a $\mathcal C$-envelope of a module $M$ is provided by the inclusion $M \hookrightarrow E(M)$ where $E(M)$ is the injective hull of $M$.

Dually, we define the notions of a {\em $\mathcal C$-precover} (= {\em right $\mathcal C$-approximation}) and a {\em $\mathcal C$-cover} (= a {\em minimal right $\mathcal C$-approximation}) of a module $M$, and of a \emph{(pre)covering} class of modules.

If $\mathcal C$ is the class of all injective modules, then $\mathcal C$ is (pre)covering, iff $R$ is a right noetherian ring (see e.g.\ \cite[5.4.1]{EJ}). For example, if $R$ is a Dedekind domain, then an injective cover of a module $M$ is easily seen to be provided by the embedding $D \hookrightarrow M$ where $D$ is the divisible part of $M$.

\medskip
It is easy to see that all the classes $\mathcal C _i$  ($1 \leq i \leq 6$) defined above are closed under isomorphisms and direct summands, so the following lemma applies to them:

\begin{lem}\label{easy}  Let $R$ be a ring and $\mathcal C$ be a preenveloping (precovering) class of modules closed under isomorphisms and direct summands. Then $\mathcal C$ is closed under direct products (direct sums).
\end{lem}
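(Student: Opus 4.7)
The plan is to exploit the universal property of preenvelopes (respectively precovers) together with the universal property of direct products (respectively direct sums) to show that any product (sum) of members of $\mathcal{C}$ is a direct summand of some element of $\mathcal{C}$, and then to invoke closure under direct summands to finish. The two halves of the lemma are categorically dual, so I will sketch the preenveloping $\Rightarrow$ product case in detail and indicate the dual argument afterwards.

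Suppose $\mathcal{C}$ is preenveloping, and let $(E_i)_{i \in I}$ be a family of modules in $\mathcal{C}$. Set $M = \prod_{i \in I} E_i$ with canonical projections $\pi_i : M \to E_i$, and pick a $\mathcal{C}$-preenvelope $g : M \to E$, so that $E \in \mathcal{C}$. For each $i \in I$ the map $\pi_i : M \to E_i$ has target in $\mathcal{C}$, hence by the preenvelope property there exists $\alpha_i : E \to E_i$ with $\alpha_i g = \pi_i$. The universal property of the product assembles the family $(\alpha_i)_{i \in I}$ into a single homomorphism $\alpha : E \to M$ satisfying $\pi_i \alpha = \alpha_i$ for every $i$. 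Then $\pi_i (\alpha g) = \alpha_i g = \pi_i$ for all $i$, so by the uniqueness part of the universal property $\alpha g = \mathrm{id}_M$. Thus $g$ is a split monomorphism and $M$ is isomorphic to a direct summand of $E \in \mathcal{C}$. Since $\mathcal{C}$ is closed under isomorphisms and direct summands, $M \in \mathcal{C}$.

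For the dual statement, assume $\mathcal{C}$ is precovering, let $(E_i)_{i \in I}$ be a family in $\mathcal{C}$, set $N = \bigoplus_{i \in I} E_i$ with canonical injections $\iota_i : E_i \to N$, and take a $\mathcal{C}$-precover $f : C \to N$. The precover property yields maps $\beta_i : E_i \to C$ with $f \beta_i = \iota_i$, and the universal property of the coproduct assembles them into $\beta : N \to C$ satisfying $\beta \iota_i = \beta_i$ for all $i$; then $f\beta\iota_i = \iota_i$ for all $i$, forcing $f \beta = \mathrm{id}_N$. Hence $N$ is a direct summand of $C \in \mathcal{C}$, and again closure under isomorphisms and direct summands gives $N \in \mathcal{C}$.

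There is no real obstacle; the argument is purely formal. The only point requiring care is to combine the lifted component maps into a single splitting by correctly invoking the universal property of the (co)product, which is exactly what makes the hypothesis of closure under direct summands sufficient to upgrade the splitting to membership in $\mathcal{C}$.
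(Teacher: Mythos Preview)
Your proof is correct and follows essentially the same approach as the paper: take a $\mathcal{C}$-preenvelope of the product, lift each projection through it, assemble the lifts into a retraction, and invoke closure under direct summands (with the dual argument for precovers and direct sums). The only cosmetic difference is that you phrase the assembly step via the universal property of the (co)product, whereas the paper writes it out componentwise.
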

\begin{proof} Assume $\mathcal C$ is preenveloping and let $( E_i \mid i \in I )$ be a family of modules in $\mathcal C$. Let $f: P \to C$ be a $\mathcal C$-preenvelope of the module $P = \prod_{i \in I} E_i$. Denote by $\pi_i : P \to E_i$ the canonical projection ($i \in I$). Then there exist homomorphisms $g_i: C \to E_i$ such that $g_i f = \pi_i$ for each $i \in I$. Define a homomorphism $g : C \to P$ by $\pi_i g(c) = g_i(c)$ for all $c \in C$ and $i \in I$. Then $gf(x) = ( g_i(f(x)) \mid i \in I ) = x$ for all $x \in P$. Thus $P$ is isomorphic to a direct summand in $C$, and $P \in \mathcal C$ by our assumption on the class $\mathcal C$.

The proof for the precovering case is dual.
\end{proof}

\section{$C_i$-modules and approximations for $i > 1$}

For $i > 1$, the main obstacle for the existence of $\mathcal C _i$-preenvelopes comes the following simple lemma:

\begin{lem}\label{key_trick} Let $R$ be a ring and $2 \leq i \leq 6$. Let $N \in \mathcal C _i$ be a non-injective module. Then the module $M=N \oplus E(N)$ does not have a $\mathcal C _i$-preenvelope.
\end{lem}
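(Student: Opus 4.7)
The plan is to assume, for contradiction, that $g\colon M \to C$ is a $\mathcal C_i$-preenvelope with $C \in \mathcal C_i$, and force $N = E(N)$. Write $N_1 := N \oplus 0$ and $E_2 := 0 \oplus E(N)$ for the canonical summands of $M$. Since $E(M) = E(N) \oplus E(N)$ is injective and hence belongs to $\mathcal C_i$, the inclusion $M \hookrightarrow E(M)$ factors through $g$, so $g$ is injective. Applying the preenvelope property to the two coordinate projections $M \to N$ and $M \to E(N)$---which target $\mathcal C_i$-modules by assumption on $N$ and by injectivity of $E(N)$---produces factorizations through $g$, i.e.\ retractions $C \to N$ of $g|_{N_1}$ and $C \to E(N)$ of $g|_{E_2}$. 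In particular, both $g(N_1)$ and $g(E_2)$ are direct summands of $C$.

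Next I reduce the problem from $C$ to $M$. By the chain $(\clubsuit)$, $C \in \mathcal C_i \subseteq \mathcal C_3$ for $i \geq 2$; and $g(N_1) \cap g(E_2) = 0$ since $g$ is injective. The $C_3$-property of $C$ therefore yields $g(M) = g(N_1) + g(E_2)$ as a direct summand of $C$. Because $\mathcal C_i$ is closed under isomorphism and direct summands, $M \cong g(M) \in \mathcal C_i$; in particular, $M$ is itself a $C_3$-module. Note that this is the step where the restriction $i \geq 2$ is essential, as $\mathcal C_1 \not\subseteq \mathcal C_3$ in general.

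The final step, which I expect to be the main obstacle, is to pick a clever pair of direct summands of $M$ on which to apply its $C_3$-property. Consider the ``diagonal'' $\Delta := \{(n,n) : n \in N\} \subseteq M$ (using $N \subseteq E(N)$). One checks $M = \Delta \oplus E_2$ by the decomposition $(n,e) = (n,n) + (0,e-n)$ together with $\Delta \cap E_2 = 0$, so $\Delta$ is a direct summand of $M$. Since $N_1$ is also a direct summand and $N_1 \cap \Delta = 0$, the $C_3$-property gives $N_1 + \Delta = N \oplus N$ as a direct summand of $M = N \oplus E(N)$. The quotient $M / (N \oplus N) \cong E(N)/N$, and projecting a section of this quotient onto the second coordinate of $M$ yields a section of the canonical surjection $E(N) \twoheadrightarrow E(N)/N$. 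Thus $N$ is a direct summand of $E(N)$; but $N$ is essential in $E(N)$, which forces $N = E(N)$ and contradicts the non-injectivity of $N$.
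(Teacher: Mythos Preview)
Your proof is correct and follows essentially the same strategy as the paper: exploit the $C_3$-property on the pair $N_1 = N \oplus 0$ and the diagonal $\Delta = \{(n,n):n\in N\}$ to force $N\oplus N$ to be a direct summand of $N\oplus E(N)$, contradicting non-injectivity of $N$.

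The one difference is that you insert an unnecessary intermediate step. You first use the pair $(N_1,E_2)$ and the $C_3$-property of $C$ to show $g(M)$ is a summand of $C$, conclude $M\in\mathcal C_i$, and only then apply $C_3$ inside $M$ to $(N_1,\Delta)$. The paper skips this detour: since $\Delta$ is a direct summand of $M$ (via $M=\Delta\oplus E_2$) and $\Delta\cong N\in\mathcal C_i$, the preenvelope property applied to the projection $M\to\Delta$ already shows $\Delta$ is a direct summand of $C$; likewise for $N_1$. Then $C_3$ in $C$ gives $N_1+\Delta$ as a summand of $C$, hence of $M$ (as $N_1+\Delta\subseteq M\subseteq C$). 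So one application of $C_3$ suffices rather than two, and you never need to establish $M\in\mathcal C_i$.
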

\begin{proof} Assume that $f:M\rightarrow C$ is a $\mathcal C _i$-preenvelope of $M$. Since $E(M) \in \mathcal C _i$, we can assume that $f$ is monic, so w.l.o.g., $M\subseteq C$. Let $A = N \oplus 0 \subseteq M$ and $B = \{(n,n) \mid n \in N \} \subseteq M$. Then $A \cong N \cong B$ are direct summands in $M$ and $A \cap B = 0$, but $A + B$ is not a direct summand in $M$, because $N$ is not injective.

We claim that $A$ and $B$ are direct summands in $C$. Since $A$ is
a direct summand of $M$ we have a commutative diagram
 $$\begin{diagram}
 \node{A}\arrow{s,t}{id_A}\arrow{e,t}{\subseteq} \node{M}\arrow{sw,t}{\pi}\\
 \node[1]{A}
 \end{diagram}$$
Since $f:M\rightarrow C$ is a $\mathcal C_i$-preenvelope of $M$ and $A \in \mathcal C_i$, we have another commutative diagram
 $$\begin{diagram}
 \node{M}\arrow{s,t}{\pi}\arrow{e,t}{\subseteq} \node{C}\arrow{sw,t}{g_A}\\
 \node[1]{A}
 \end{diagram}$$
So the inclusion map $A \hookrightarrow C$ splits, and $A$ is a direct summand in $C$.

Similarly, $B$ is a direct summand in $C$. Since $C \in \mathcal C _i$, $C$ is a C3-module. Then $A + B$ is a direct summand in $C$, and hence in $M$, a contradiction.
\end{proof}

Now, we can prove that the classes $\mathcal C _i$ ($2 \leq i \leq 6$) provide for preenvelopes and precovers only if they coincide with the class of all injective modules:

\begin{thm}\label{rare} Let $R$ be a ring and $2 \leq i \leq 6$. Then the following conditions are equivalent:\\
(1) The class $\mathcal C _i$ is closed under finite direct sums; \\
(2) $\mathcal C _i$ coincides with the class of all injective modules; \\
(3) $\mathcal C _i$ is (pre)enveloping; \\
(4) $\mathcal C _i$ is (pre) covering. \\
If these conditions are satisfied, then $R$ is a right noetherian right V-ring; moreover, all semisimple modules are injective. Except for the case of $i = 6$, these conditions are further equivalent to \\
(5) $R$ is a semisimple artinian ring.
\end{thm}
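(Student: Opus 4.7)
The plan is to prove $(1) \Leftrightarrow (2) \Leftrightarrow (3) \Leftrightarrow (4)$ uniformly for $i \in \{2,\dots,6\}$, deduce that $R$ is a right noetherian right V-ring, and then address $(5)$ for $i \neq 6$. Lemma~\ref{easy} gives $(3) \Rightarrow (1)$ and $(4) \Rightarrow (1)$ at once, while $(2) \Rightarrow (1)$ and $(2) \Rightarrow (3)$ are immediate (the latter via injective envelopes). For the crucial implication $(1) \Rightarrow (2)$, I would take $N \in \mathcal C _i$ and note that $E(N)$ is injective, hence $E(N) \in \mathcal C _i$ by~$(\clubsuit)$; then $(1)$ yields $M := N \oplus E(N) \in \mathcal C _i \subseteq \mathcal C _3$. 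The submodules $A = N \oplus 0$ and $B = \{(n,n) \mid n \in N\}$ from the proof of Lemma~\ref{key_trick} are direct summands of $M$ with $A \cap B = 0$, so by $C_3$ the sum $A+B$ is a direct summand of $M$; the computation in that proof, used here in the forward direction, then shows that the copy of $N$ sitting in $E(N)$ is a direct summand of $E(N)$, hence equal to $E(N)$ by essentiality, so $N$ is injective.

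For the structural statements and $(2) \Rightarrow (4)$: every semisimple module is quasi-injective (noted after Definition~\ref{more}), so all semisimples lie in $\mathcal C _6 \subseteq \mathcal C _i$, and $(2)$ forces them to be injective. This gives the V-ring condition, and Bass--Papp applied to arbitrary direct sums of simples forces right noetherianity. Over a right noetherian ring the injective class is (pre)covering by~\cite[5.4.1]{EJ}, completing $(2) \Rightarrow (4)$.

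For $i \neq 6$, $(5) \Rightarrow (2)$ is trivial. For the converse, I would exploit that each $\mathcal C _i$ with $i \in \{2,3,4,5\}$ properly contains $\mathcal C _6$. If $i = 3$, every indecomposable module is $C_3$; since $R_R$ is noetherian it decomposes into finitely many indecomposable summands, each injective by $(2)$, so $R_R$ is self-injective. Right noetherian plus right self-injective yields QF, hence right artinian, and combined with V-ring this forces semisimple artinian. If $i = 4$, every uniform module is $C_4$ (uniform implies both $C_1$ and indecomposable-hence-$C_3$), so $(2)$ makes all uniforms injective; since every nonzero module over a noetherian ring contains a uniform submodule, a Zorn argument exhibits each module $M$ as an essential extension of a direct sum $\bigoplus U_j$ of uniforms, which is injective by noetherianity and hence equal to $M$, so every module is injective and $R$ is semisimple. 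The remaining cases $i \in \{2,5\}$ reduce to the case $i = 5$ via the inclusions $\mathcal C _5 \subseteq \mathcal C _i$.

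The main obstacle is the residual case $i = 5$: establishing that $\mathcal C _5 = $ injectives over a right noetherian right V-ring forces $R$ to be semisimple. Here neither uniform nor indecomposable modules are automatically $C_5$, as the $C_2$ property on an indecomposable module is equivalent to co-Hopficity, which may fail over non-semisimple noetherian V-rings such as Cozzens' PID V-domains. Producing a continuous (i.e.\ $C_5$) non-injective module in any non-semisimple noetherian V-ring is the crucial additional step needed to close the argument.
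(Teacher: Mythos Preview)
Your overall architecture matches the paper's: the cycle $(1)\Leftrightarrow(2)\Leftrightarrow(3)\Leftrightarrow(4)$ is assembled the same way, and your direct $(1)\Rightarrow(2)$ argument is essentially the content of Lemma~\ref{key_trick} applied with $C=M$ itself. Two points deserve comment.

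\medskip
\textbf{Noetherianity.} Your appeal to Bass--Papp is not quite right. Bass--Papp says $R$ is right noetherian iff \emph{every} direct sum of injectives is injective; you only know that direct sums of \emph{simple} injectives are injective (since all semisimples are injective). That is a weaker hypothesis, and it does suffice, but it is not literally Bass--Papp. The paper proves this implication by hand: from a strictly ascending chain $(I_n)$ one builds a map $\varphi:I\to\bigoplus_n S_n$ into a countable semisimple module whose image is not contained in any finite subsum, contradicting the fact that any extension of $\varphi$ to $R$ must have finitely generated image. You should either supply this argument or cite the known fact that ``all semisimples injective'' characterises right noetherian right $V$-rings.

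\medskip
\textbf{The case $i=5$.} This is a genuine gap, and you correctly isolate it. The paper does not argue here at all: it observes that $\mathcal C_5$ is the smallest of the classes $\mathcal C_2,\mathcal C_3,\mathcal C_4,\mathcal C_5$, so condition~(2) for any $i\in\{2,3,4,5\}$ forces $\mathcal C_5=\{\text{injectives}\}$, and then invokes \cite[Corollary~2]{HR} (Huynh--Rizvi), which shows precisely that if every continuous right $R$-module is injective then $R$ is semisimple artinian. So the ``crucial additional step'' you are missing is supplied by an external reference rather than by an argument internal to the paper. Your self-contained proofs for $i=3$ (via $R_R$ self-injective $\Rightarrow$ QF, then $J=0$) and $i=4$ (via all uniforms injective) are correct and pleasant, but note that once the $i=5$ case is in hand they become redundant, since $\mathcal C_5\subseteq\mathcal C_3\cap\mathcal C_4$ already reduces those cases to $i=5$ as well.
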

\begin{proof} The implication (1) $\Rightarrow$ (2) follows by Lemma \ref{key_trick}. The implications (2) $\Rightarrow$ (3) and (5) $\Rightarrow$ (1) are clear, while (3) $\Rightarrow$ (1) and (4) $\Rightarrow$ (1) follow by Lemma \ref{easy}.

Next we prove that (2) implies (4). Since all semisimple modules are quasi-injective, (2) implies they are injective; in particular, $R$ is a right V-ring. We will prove that that $R$ is right noetherian. Assume this is not the case, so there is a strictly increasing chain of finitely generated right ideals in $R$
$$0=I_0\subsetneq I_1\subsetneq \cdots \subsetneq I_{2n}\subsetneq I_{2n+1}\subsetneq\cdots\subsetneq\cdots .$$
Let  $I=\bigcup_{n<\omega}I_{n}$. Since the module $I_{2n+1}/I_{2n}$ is non-zero and finitely generated, there is an epimorphism $\rho_n:  I_{2n+1}/I_{2n} \rightarrow S_n$ where $S_n$ is a simple module, for each $n < \omega$.

We will define $\varphi \in \hbox{Hom}_R(I,S)$, where $S=\bigoplus_{n<\omega}S_n$, as the union $\varphi = \bigcup_{n < \omega} \varphi_n$ where $\varphi_n \in \hbox{Hom}_R(I_{2n},\bigoplus_{m < n} S_m)$ and $\varphi_{n+1} \restriction I_{2n} = \varphi_n$ for each $n <\omega$.

First, $\varphi_0 = 0$. If $\varphi_n$ is defined, we use the injectivity of the semisimple module $T_n = \bigoplus_{m < n} S_m$ to extend $\varphi_n$ to $\eta_n \in \hbox{Hom}_R(I_{2n+1},T_n)$. Consider the canonical projection $\pi_n: I_{2n+1}\to I_{2n+1}/I_{2n}$. Then $0 \neq \rho_n \pi_n (x_n) \in S_n$, but $\rho_n \pi_n(I_{2n}) = 0$. We define $\varphi_{n+1}$ as an extension to $I_{2n + 2}$ of the morphism $\eta_n + \rho_n \pi_n : I_{2n+1} \to T_{n+1} = T_n \oplus S_n$. Notice that $\varphi_{n+1}(x_n) = \eta_n(x_n) + \rho_n \pi_n(x_n) \in T_{n+1} \setminus T_n$.

Since $S$ is semisimple, its injectivity yields an extension of $\varphi$ to some $\psi : R \to S$. The image of $\psi$ is contained in $T_n$ for some $n < \omega$, so $\varphi_{n+1}(x_n) = \varphi(x_n) = \psi(x_n) \in T_n$, a contradiction.

Since injective modules form a covering class over any right noetherian ring (see \cite[5.4.1]{EJ}), condition (4) holds.

In view of $(\clubsuit)$, it remains only to show (2) $\Rightarrow$ (5) for $i = 5$ (i.e., for the smallest class $\mathcal C _5$). But this has already been proved in \cite[Corollary 2]{HR}.
\end{proof}

In the case of quasi-injective modules (i.e., for $i = 6$), it is well known that each module has a (unique) minimal quasi-injective extension, namely the sum of all images of $M$ taken over all the endomorphisms of $E(M)$, see \cite[Corollary 1.15]{MM}. This extension, however, is not a quasi-injective (pre)envelope in general (just note that for each non-injective module $N$, the module $M = N \oplus E(N)$ has no quasi-injective preenvelope by Lemma \ref{key_trick}). In the quasi-injective case, we cannot say much more about the properties of the rings $R$ satisfying the equivalent conditions of Theorem \ref{rare}. In fact, we have

\begin{exm} \label{cozz} Let $R$ be any hereditary two-sided noetherian right V-ring. Then the classes of all quasi-injective and all injective modules coincide by \cite[Proposition 5.19(3)]{CF}. Hence the equivalent conditions of Theorem \ref{rare} are satisfied. (We refer to \cite[Chapter 5]{CF} for interesting constructions of such rings employing differential polynomials over universal differential fields.)
\end{exm}

\medskip
It is well-known that right noetherian rings are characterized by the property that the class of all injective modules is closed under arbitrary direct sums, and right hereditary rings by the injective modules being closed under homomorphic images. It turns out that we can relax this characterization by employing any of the larger classes $\mathcal C _i$ ($1 < i \leq 6$) studied here; indeed, the largest class $\mathcal C _3$ is sufficient:

\begin{lem}\label{Noetherian} The following conditions are equivalent for a ring $R$:\\
$(1)$ $R$ is right noetherian;\\
$(2)$ The direct sum of any set of injective modules is a $C_3$-module.
\end{lem}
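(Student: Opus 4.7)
For $(1)\Rightarrow(2)$, I would simply invoke the Bass--Papp theorem: right noetherianity of $R$ is equivalent to arbitrary direct sums of injective modules being injective, and every injective module is $C_3$ (indeed quasi-injective, hence $C_2$ and $C_3$ via the chain $(\clubsuit)$).

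For $(2)\Rightarrow(1)$ I would argue by contradiction, recycling the configuration from Lemma~\ref{key_trick}. Assuming $R$ is not right noetherian, the converse direction of Bass--Papp supplies a family $(E_n)_{n<\omega}$ of injective modules whose direct sum $N=\bigoplus_{n<\omega}E_n$ fails to be injective. The module
$$ M \;=\; N \oplus E(N) \;=\; E(N)\oplus\bigoplus_{n<\omega}E_n $$
is itself a direct sum of injective modules, so hypothesis $(2)$ would force $M$ to be $C_3$.

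I would then show this cannot be, mimicking the proof of Lemma~\ref{key_trick}: set $A=N\oplus 0$ and $B=\{(n,n)\mid n\in N\}\subseteq N\oplus E(N)$. Both are direct summands of $M$ (for $B$, take the complement $0\oplus E(N)$), and $A\cap B=0$. If $A+B=N\oplus N$ were a direct summand of $M$, the associated projection restricted to $0\oplus E(N)$ would yield a retraction $E(N)\to N$ of the inclusion $N\hookrightarrow E(N)$, forcing $N$ to be injective --- a contradiction. Hence $M$ is not $C_3$, contradicting $(2)$, and $R$ must be right noetherian.

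The argument rests on two ingredients already available: Bass--Papp and Lemma~\ref{key_trick}. The only point that needs a moment's attention is that the non-injective witness supplied by Bass--Papp is itself a direct sum of injectives, so adjoining $E(N)$ keeps $M$ inside the class to which $(2)$ applies; I do not anticipate any further obstacle.
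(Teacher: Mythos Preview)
Your proposal is correct and follows essentially the same route as the paper: form $M=N\oplus E(N)$, observe that $M$ is again a direct sum of injectives and hence $C_3$ by hypothesis, then use the $A$, $B$ configuration from Lemma~\ref{key_trick} to force $N$ injective. The only cosmetic difference is that the paper argues directly (showing every direct sum of injectives is injective) and simply cites Lemma~\ref{key_trick}, whereas you frame it as a contradiction via Bass--Papp and spell out the $A$, $B$ argument explicitly---which is arguably cleaner, since the \emph{statement} of Lemma~\ref{key_trick} concerns preenvelopes rather than the $C_3$ property of $M$ itself, and it is really the construction inside its proof that is being invoked.
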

\begin{proof} Clearly (1) $\Rightarrow$ (2).\\
(2)$\Rightarrow$ (1): Let $C=\bigoplus_{i\in I}E_i$ be a direct sum of injective modules. Then $M = C \oplus E(C)$ is also a direct sum of injective modules, so both $C$ and $M$ are $C_3$ by the assumption (2). By Lemma \ref{key_trick}, this implies that $C$ is injective, and proves (1).
\end{proof}

\begin{lem} The following conditions are equivalent for a ring $R$:\\
$(1)$ $R$ is right hereditary;\\
$(2)$ Every quotient module of an injective module is $C_3$.
\end{lem}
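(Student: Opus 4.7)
The plan is to follow the same pattern used in Lemma \ref{Noetherian}, exploiting the trick from Lemma \ref{key_trick}. The implication $(1) \Rightarrow (2)$ is immediate: when $R$ is right hereditary, every quotient of an injective module is injective, and injective modules are in particular $C_2$-modules, hence $C_3$-modules.

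For the reverse direction $(2) \Rightarrow (1)$, I would rely on the standard characterization (recalled in the paragraph preceding Lemma \ref{Noetherian}) that $R$ is right hereditary iff every homomorphic image of an injective module is injective. So I fix an injective module $E$ and a submodule $N \subseteq E$, put $Q = E/N$, and aim to show that $Q$ is injective. The construction to test is $M = Q \oplus E(Q)$, which is a quotient of the injective module $E \oplus E(Q)$ via $\pi \oplus \mathrm{id}_{E(Q)}$, where $\pi : E \to E/N = Q$ is the canonical projection. Hypothesis $(2)$ then guarantees that $M$ is a $C_3$-module.

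Next I would apply the trick used inside the proof of Lemma \ref{key_trick} directly to $M$: set $A = Q \oplus 0$ and $B = \{(q,q) \mid q \in Q\}$. Both are direct summands of $M$ (each having $0 \oplus E(Q)$ as a complement), and $A \cap B = 0$. Since $M$ is $C_3$, $A + B = Q \oplus Q$ must be a direct summand of $M = Q \oplus E(Q)$. The main (and only nontrivial) step is then to extract from a retraction $r : Q \oplus E(Q) \to Q \oplus Q$ a splitting of the inclusion $Q \hookrightarrow E(Q)$: writing $r(0,y) = (f(y), g(y))$ and using $r|_{A+B} = \mathrm{id}$ forces $g(q) = q$ for every $q \in Q$, so $g : E(Q) \to Q$ is a retraction onto $Q$. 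Since $Q$ is essential in $E(Q)$, this forces $Q = E(Q)$, proving that $Q$ is injective and completing the argument.
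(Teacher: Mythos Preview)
Your proof is correct and follows essentially the same route as the paper: both form $Q \oplus E(Q)$ as a quotient of the injective module $E \oplus E(Q)$, use hypothesis (2) to conclude it is $C_3$, and then apply the trick from Lemma~\ref{key_trick} to force $Q$ to be injective. The only cosmetic difference is that the paper cites Lemma~\ref{key_trick} directly (and therefore also records that $Q$ itself is $C_3$, to match that lemma's hypothesis $N\in\mathcal C_i$), whereas you unpack the relevant part of that argument by hand and thereby see that the extra hypothesis on $Q$ is not actually needed.
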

\begin{proof} Clearly $(1)\Rightarrow (2)$. \\
$(2)\Rightarrow (1)$ Let $I$ be an injective module and $M$ be a submodule of $I$. Then $I/M \oplus E(I/M)$ is a homomorphic image of the injective module $I \oplus E(I/M)$, hence it is $C_3$ by assumption. Since $I/M$ is $C_3$, too, Lemma \ref{key_trick} implies that $I/M$ is injective.
\end{proof}

\section{The case of $C_1$-modules}

In this section, we will deal with the remaining case of $i = 1$, that is, of $C_1$-modules. These modules are important in the decomposition theory of modules, see e.g.\ \cite{GG1} and \cite{GG2}. We start with a lemma showing that if $\mathcal C _1$ is closed under finite direct sums, then all {\lq}singular{\rq} $C_1$-modules are injective:

\begin{lem}\label{C1_inj} Let $R$ be a ring such that the class $\mathcal C _1$ is closed under finite direct sums.
Let $N$ be a $C_1$-module such that $\hbox{Hom}_R(N,E(R)) = 0$. Then $N$ is injective.
\end{lem}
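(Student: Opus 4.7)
The plan is to assume for contradiction that $N$ is not injective and then construct an extension of a given non-extendable map. By Baer's criterion, non-injectivity of $N$ yields a right ideal $I \leq R$ and a homomorphism $f : I \to N$ that does not extend to $R$. The strategy mimics the trick of Lemma~\ref{key_trick}, but replacing $E(N)$ by $E(R)$ so that the hypothesis $\hbox{Hom}_R(N, E(R)) = 0$ can be invoked. I will form $M = N \oplus E(R)$, which lies in $\mathcal{C}_1$ by the closure hypothesis, and place a ``graph-like'' copy of $I$ inside $M$ whose essential extension to a direct summand will, under the vanishing assumption, produce an extension of $f$.

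The key construction is the submodule $U = \{(f(i), -i) : i \in I\}$ of $M$, using the canonical inclusions $I \hookrightarrow R \hookrightarrow E(R)$. By the $C_1$-property, $U$ is essential in some direct summand $D$ of $M$; let $\rho : M \to D$ be the projection along the complement $D'$. A short essentiality argument shows $D \cap (N \oplus 0) = 0$: the only element of $U$ with vanishing second coordinate is zero, so any nonzero $(n, 0) \in D$ would have a nonzero multiple in $U$ which would have to be zero, a contradiction. Consequently the second projection $\pi_2|_D : D \to E(R)$ is injective.

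Now I invoke the hypothesis. The composite $\pi_2 \circ \rho$ restricted to $N \oplus 0 \cong N$ is a homomorphism $N \to E(R)$, hence zero by assumption; injectivity of $\pi_2|_D$ upgrades this to $\rho|_{N \oplus 0} = 0$, so $N \oplus 0 \subseteq D'$. For each $i \in I$ we then have $(f(i), -i) \in U \subseteq D$ and $(f(i), 0) \in N \oplus 0 \subseteq D'$, so subtraction yields $\rho(0, -i) = (f(i), -i)$. Setting $(n_0, e_0) := \rho(0, -1) \in M$ and applying right $R$-linearity gives $(n_0 i, e_0 i) = (f(i), -i)$ for every $i \in I$, in particular $n_0 i = f(i)$. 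Thus $r \mapsto n_0 r$ is an extension of $f$ to $R$, contradicting the choice of $f$. The nonobvious design decision — and where I expect the main difficulty — is the asymmetric form of $U$: it must simultaneously encode the non-extendability of $f$ in its first coordinate and route the second coordinate through the injective module $E(R)$, so that the hypothesis $\hbox{Hom}_R(N, E(R)) = 0$ can be used to force $N \oplus 0 \subseteq D'$ and thereby convert $\rho(0,-1)$ into the sought-for lifting element of $N$.
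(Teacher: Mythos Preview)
Your argument is correct and follows essentially the same strategy as the paper: form $M = N \oplus E(R)$, embed the graph of a non-extendable $f:I\to N$ as an essential submodule of a direct summand $D$, use the hypothesis $\hbox{Hom}_R(N,E(R))=0$ to force $N\oplus 0$ into the complementary summand, and then read off an extension of $f$. The only differences are cosmetic --- the paper takes $I$ essential so that $E(D)\cong E(R)$, deduces that $D$ is injective, and extends via injectivity, whereas you bypass both steps by computing directly with the element $\rho(0,-1)$ --- but the core idea is identical.
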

\begin{proof} Assume that $N$ is not injective. By Baer's Criterion, $\hbox{Ext}^1_R(R/I,N) \neq 0$ for an essential right ideal $I$ of $R$.
So there exists an $f \in \mbox{Hom}_R(I,N)$ which does not extend to $R$.

Consider the module $M = E(R) \oplus N$ and its submodule $J = \{ (i,f(i)) \mid i \in I \}$. Clearly, $I \cong J$ via the map $e : i \mapsto (i,f(i))$.
Moreover, $M \in \mathcal C_1$ by our assumption on the class $\mathcal C _1$. So there are an essential extension $J \trianglelefteq U \subseteq M$ and a submodule $V \subseteq M$ such that $M = U \oplus V$. Then $E(R) \cong E(U)$, so $\hbox{Hom}_R(N,U) = 0$ by our assumption on $N$. It follows that $N$ is a direct summand in $V$, whence $U$ is isomorphic to a direct summand in $E(R)$, and $U$ is injective. Hence $e$ extends to an $h : R \to U$. Let $\pi : M \to N$ denote the canonical projection, and put $g = \pi h : R \to N$. Then $g \restriction I = f$, a contradiction.
\end{proof}

\begin{rem}\label{si} If $R$ is a right non-singular ring such that the class $\mathcal C _1$ is closed under finite direct sums, then Lemma \ref{C1_inj} yields that each singular module is completely reducible and injective, that is, $R$ is a right \emph{SI-ring} in the sense of \cite[Chap. III]{G}. In particular, $R$ is right hereditary.
\end{rem}

\begin{exm} \label{UT2} Let $R = UT_2(K)$ denote the upper-triangular $2 \times 2$ matrix ring over a skew-field $K$. Up to isomorphism, there are just three indecomposable modules: the simple projective $A$, the simple injective $B$, and the projective and injective $C$, and each module is a direct sum of copies of these modules. The singular modules are those isomorphic to direct sums of copies of $B$, so $R$ is a (hereditary) SI-ring. In fact, in this case $\mathcal C _1 = \hbox{Mod-}R$ (see Theorem \ref{mainC1} below).
\end{exm}

The following property of $C_1$-modules plays a key role in the noetherian setting:

\begin{lem}\label{unif} \cite[2.19]{MM} Let $R$ be a right noetherian ring. Then each $C_1$-module is a direct sum of uniform modules.
\end{lem}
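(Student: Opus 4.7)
The plan is to construct a decomposition of $M$ into uniform direct summands by a Zorn's lemma argument, with the right noetherian hypothesis entering via the Bass--Papp theorem (direct sums of injective modules are injective over a right noetherian ring; see \cite[5.4.1]{EJ}).

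First, I would show that every nonzero submodule of $M$ contains a uniform direct summand of $M$. Given a nonzero $x \in M$, the cyclic $xR$ is noetherian, so the ACC precludes an infinite independent family of nonzero submodules inside $xR$, forcing $xR$ to contain a uniform submodule $U$. By the $C_1$-property, $U$ is essential in a direct summand $D$ of $M$, and $D$ is itself uniform because $U$ is essential in $D$.

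Second, by Zorn's lemma I choose a maximal family $(U_i)_{i \in I}$ of uniform submodules of $M$ such that the sum $N = \bigoplus_{i \in I} U_i$ is direct and every finite partial sum $\bigoplus_{i \in F} U_i$ is a direct summand of $M$ (a local direct summand). Step one combined with maximality forces $N$ to be essential in $M$: otherwise, step one applied to any nonzero $V \subseteq M$ with $V \cap N = 0$ produces a uniform direct summand $D$ of $M$ with $D \cap N = 0$ (uniformity of $D$ propagates $V \cap N = 0$ through the essential extension $U' \trianglelefteq D$), and $D$ may then be adjoined to the family. To conclude $N = M$ I invoke Bass--Papp: $\bigoplus_{i \in I} E(U_i)$ is injective, hence equals $E(N) = E(M)$. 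For $x \in M \subseteq E(M)$ with finite support $F \subseteq I$ in this direct sum, writing $M = \bigoplus_{i \in F} U_i \oplus M'$ and lifting to $E(M) = \bigoplus_{i \in F} E(U_i) \oplus E(M')$, the $E(M')$-component of $x$ vanishes, so $x \in \bigoplus_{i \in F} U_i \subseteq N$.

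The main obstacle is the Zorn enlargement in the second step: after adjoining a new uniform direct summand $D$ with $D \cap N = 0$, one must verify that $D \oplus \bigoplus_{i \in F} U_i$ remains a direct summand of $M$ for every finite $F \subseteq I$. This is essentially a $C_3$-type condition, which is not formally implied by $C_1$ alone, so it must be extracted from the $C_1$-structure of the direct-summand complement of $\bigoplus_{i \in F} U_i$; alternatively one can reorganise the Zorn argument to track the indecomposable decomposition of $E(M)$ provided by Matlis's theorem, which is available because $R$ is right noetherian.
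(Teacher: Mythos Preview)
The paper gives no proof of this lemma; it merely quotes \cite[2.19]{MM}. So there is nothing in the text to compare against, and I assess your outline on its own. Your Steps~1 and~4 are correct. In particular, the Bass--Papp argument that an \emph{essential} local summand $N=\bigoplus_{i\in I}U_i$ of $M$ equals $M$ works cleanly: for $x\in M\subseteq E(M)=\bigoplus_{i\in I}E(U_i)$ supported on a finite $F$, write $M=\bigoplus_{i\in F}U_i\oplus M'$; then $M'\cap\bigoplus_{i\in F}E(U_i)=0$ because $\bigoplus_{i\in F}U_i$ is essential in $\bigoplus_{i\in F}E(U_i)$, so the $M'$-component of $x$ vanishes. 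This is a different (and shorter) route than the one in \cite{MM}, which passes through an ascending-chain condition on annihilators (Theorem~2.17 there).

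The gap you flag in Step~3, however, is genuine, and neither of your suggested patches closes it. To adjoin the new uniform summand $D$ you need $D\oplus\bigoplus_{i\in F}U_i$ to be a summand of $M$ for \emph{every} finite $F$; working ``in the complement of $\bigoplus_{i\in F}U_i$'' cannot achieve this, since that complement varies with $F$ while $D$ must be fixed, and the $C_3$-condition really can fail in $C_1$-modules (in $M=\mathbb Z^2$ take $A=\mathbb Z(1,0)$ and $B=\mathbb Z(1,2)$). The Matlis decomposition of $E(M)$ does not by itself organise the summands of $M$. The correct repair is to apply $C_1$ to $N$ rather than to an arbitrary $V$ disjoint from $N$: choose $N\trianglelefteq N^*$ with $M=N^*\oplus M'$; if $N$ is not essential in $M$ then $M'\neq 0$, and since $M'$ is a summand (hence $C_1$) it has a uniform direct summand $D$. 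For any finite $F$, the modular law applied to a decomposition $M=\bigoplus_{i\in F}U_i\oplus W$ gives $N^*=\bigoplus_{i\in F}U_i\oplus(N^*\cap W)$, whence $M=\bigoplus_{i\in F}U_i\oplus(N^*\cap W)\oplus D\oplus(\text{complement of }D\text{ in }M')$, so $D\oplus\bigoplus_{i\in F}U_i$ is a summand of $M$, contradicting maximality. With this single adjustment your outline becomes a complete proof.
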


\begin{thm} \label{mainC1} Let $R$ be a right noetherian ring. Then the following are equivalent:\\
$(1)$ The class $\mathcal C _1$ is (pre)enveloping.\\
$(2)$ $R$ is a right artinian ring such that each uniform module is either simple or injective of composition length $2$. \\
In this case $\mathcal C _1$ is the class of all modules of the form $S \oplus I$ where $S$ is semisimple and $I$ is injective.
\end{thm}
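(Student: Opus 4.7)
The plan is to prove $(2) \Rightarrow (1)$ together with the explicit description of $\mathcal C _1$, and then to derive $(1) \Rightarrow (2)$; the main difficulty lies in the second direction.

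Assume $(2)$. I will first show $\mathcal C _1 = \{\,S \oplus I \mid S \text{ semisimple},\ I \text{ injective}\,\}$. The inclusion $\subseteq$ is immediate from Lemma \ref{unif}: each $M \in \mathcal C _1$ decomposes as $M = \bigoplus_\alpha U_\alpha$ with $U_\alpha$ uniform, and under $(2)$ each $U_\alpha$ is simple or uniform injective of length $2$. Collecting the two kinds and using that arbitrary direct sums of injectives over the noetherian $R$ are injective gives $M \cong S \oplus I$. For $\supseteq$, I will exploit that under $(2)$ the Loewy length of $R$ is at most $2$ (since $E(R/\mathfrak{m})$ has length $\leq 2$ for every maximal right ideal $\mathfrak{m}$, forcing $J(R)^2 = 0$). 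Given a submodule $N$ of $M = S \oplus I$, $\soc(N)$ is essential in $N$ and lies inside $\soc(M) = S \oplus \soc(I)$, so a direct summand of $M$ containing $N$ essentially can be constructed by choosing, for each uniform summand of $M$, whether to keep it entirely or to replace it by a suitable subsummand according to how $N$ projects there.

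With this description in hand, the $\mathcal C _1$-preenvelope of an arbitrary module $M$ will be
\[ \iota_M \oplus \pi_M : M \longrightarrow E(M) \oplus M/\rad(M), \]
where $\iota_M$ is the injective hull inclusion and $\pi_M$ the projection onto the top. The target lies in $\mathcal C _1$ because $E(M)$ is injective and $M/\rad(M)$ is semisimple. For any $g : M \to S' \oplus I'$ in $\mathcal C _1$, the component into $I'$ extends through $\iota_M$ by injectivity of $I'$, while the component into $S'$ factors through $\pi_M$ since $\rad(M)$ maps into $\rad(S') = 0$.

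For $(1) \Rightarrow (2)$, Lemma \ref{easy} gives that $\mathcal C _1$ is closed under products, hence under finite direct sums, so Lemma \ref{C1_inj} applies. I plan to first show that every uniform right $R$-module is simple or injective of length $2$. Given a uniform $U$ that is neither, natural candidates for a module without a $\mathcal C _1$-preenvelope are $U \oplus E(U)$ (when $U$ is non-injective) or, when $U$ is injective of length $> 2$, a module built from a proper uniform submodule of $U$ and its injective hull. Once the uniform structure is settled, right artinianness follows: any finitely generated $M$ embeds into $E(M)$, which is a finite direct sum of uniform injectives by Lemma \ref{unif}, each of length $\leq 2$, giving $M$ finite length. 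The hardest step, and the main obstacle, is the structural restriction on uniforms in $(1) \Rightarrow (2)$: the trick of Lemma \ref{key_trick} does not transfer directly since $C_1$ does not force sums of direct summands to be summands, so a more refined argument exploiting how maps into several different CS-targets must factor through a single preenvelope will be required.
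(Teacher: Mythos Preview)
Your $(2)\Rightarrow(1)$ direction is viable and in one respect improves on the paper: the explicit preenvelope $M\to E(M)\oplus M/\rad(M)$ is cleaner than the paper's abstract product over all maps into a representative set of indecomposables (the paper then invokes pure-injectivity of the target together with \cite[5.11]{GT} to upgrade preenvelopes to envelopes, a step you do not address). Your argument that every $S\oplus I$ lies in $\mathcal C_1$ is, however, too vague as written; the paper simply quotes \cite[Lemma 5]{DS} for this fact, and a self-contained proof needs more care than ``choose for each uniform summand whether to keep it or replace it by a subsummand.''

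The real gap is in $(1)\Rightarrow(2)$. You correctly identify that the $C_3$-trick of Lemma \ref{key_trick} does not transfer, but your fallback---produce, from a bad uniform $U$, a module such as $U\oplus E(U)$ with no $\mathcal C_1$-preenvelope---remains only a hope, and you give no mechanism for carrying it out. The paper takes a completely different route that does not analyse individual uniforms at all. From closure under arbitrary products (Lemma \ref{easy}), every $C_1$-module $M$ has $M^\kappa\in\mathcal C_1$ for all cardinals $\kappa$; by Lemma \ref{unif} each such power decomposes into uniform summands, all of cardinality bounded by a fixed $\lambda_R$ depending only on $R$. Huisgen-Zimmermann's criterion \cite[Theorem 10]{H} then forces every $C_1$-module to be $\Sigma$-pure-injective. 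Hence any direct sum of $C_1$-modules, being pure in the corresponding product, splits off and is again $C_1$; so $\mathcal C_1$ is closed under arbitrary direct sums. Now \cite[Corollary 3]{HJL} applies and yields that $R$ is right artinian with every uniform module of composition length at most $2$. This chain---products $\Rightarrow$ $\Sigma$-pure-injectivity $\Rightarrow$ closure under direct sums $\Rightarrow$ the Huynh--Jain--L\'opez-Permouth structure theorem---is the key idea your proposal is missing.
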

\begin{proof} Assume (1). By Lemma \ref{easy}, $\mathcal C _1$ is closed under direct products.

Let $M$ be a $C_1$-module. Then for each cardinal $\kappa$, also $M^\kappa \in \mathcal C_1$. By Lemma \ref{unif}, $M^\kappa \cong \bigoplus_i U_i$ where each $U_i$ is uniform, hence a submodule of an injective hull of a cyclic module. Let $\lambda _R$ denote the supremum of cardinalities of injective hulls of all cyclic $R$-modules. Then for each cardinal $\kappa$, $M^\kappa$ is a direct sum of $\leq \lambda _R$-generated modules. By \cite[Theorem 10]{H}, this implies that $M$ is a $\Sigma$-pure-injective module.

Moreover, each direct sum $D = \bigoplus_k C_k$ of $C_1$-modules is a pure submodule in the direct product $P = \prod_k C_k$. By the above, $P$ is $\Sigma$-pure-injective, so all its pure submodules are direct summands. In particular, $D$ is a $C_1$-module, so the class $\mathcal C _1$ is also closed under direct sums. By \cite[Corollary 3]{HJL}, $R$ is a right artinian ring and each uniform module $U$ has composition length at most $2$, whence
$U$ is either simple or injective.

Assume (2). By \cite[Lemma 5]{DS}, any direct sum of simple modules and injective modules of length $2$ is $C_1$, so $\mathcal C _1$ coincides with the class of all modules of the form $S \oplus I$ where $S$ is semisimple and $I$ is injective by Lemma \ref{unif}. Since $R/J$ is semisimple artinian, and semisimple modules are exactly the ones annihilated by the Jacobson radical $J$ of $R$, the class $\mathcal C _1$ is closed under direct products and, again by \cite[Theorem 10]{H}, all semisimple modules are $\sum$-pure-injective, and so are all modules in $\mathcal C _1$.

Let $\mathcal S \subseteq \mathcal U$ be a representative set of all simple modules and all indecomposable injectives. If $N$ is an arbitrary module, then there is only a set of homomorphisms from $N$ to the modules in $\mathcal S$. Let $u : N \to C$ be the product of these morphisms. Then $C \in \mathcal C _1$ by the above. If $M \in \mathcal C _1$, then $M$ is isomorphic to a direct sum of elements of $\mathcal S$, and hence to a direct summand in the direct product $P$ of those elements. Now, each homomorphism $f : N \to P$ factorizes through $u$. Since there is a split monomorphism $M \hookrightarrow P$, we infer that $u$ is a $\mathcal C_1$-preenvelope of $N$. Finally, all modules in $\mathcal C_1$ are pure-injective, so $N$ has a $\mathcal C _1$-envelope by \cite[5.11]{GT}, and (1) holds.
\end{proof}

\begin{cor} \label{comC1} Let $R$ be a commutative noetherian ring. Then the following are equivalent:\\
$(1)$ The class $\mathcal C _1$ is (pre)enveloping.\\
$(2)$ $R$ decomposes into a finite ring direct product $R = \prod_{i < m} R_i$, where each $R_i$ is an artinian local principal ideal ring of length $\leq 2$. \\
$(3)$ $\mathcal C _1 = \hbox{Mod-}R$.
\end{cor}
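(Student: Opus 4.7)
The plan is to cycle $(3) \Rightarrow (1) \Rightarrow (2) \Rightarrow (3)$, with Theorem~\ref{mainC1} doing the heavy lifting on the nontrivial implications. The implication $(3) \Rightarrow (1)$ is immediate since then every module is its own $\mathcal C _1$-envelope.

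For $(1) \Rightarrow (2)$, Theorem~\ref{mainC1} forces $R$ to be artinian with every uniform module either simple or injective of composition length~$2$. As a commutative artinian ring, $R$ decomposes as a finite product $R = \prod_{i<m} R_i$ of local rings $(R_i, \mathfrak m_i, k_i)$, and the $R$-submodule lattice of any $R_i$-module coincides with its $R_i$-submodule lattice (the $R$-action factors through the projection $R \twoheadrightarrow R_i$), so the length-at-most-$2$ constraint on uniform modules descends to each $R_i$. The indecomposable injective $E_i := E(k_i)$ is uniform; if $\ell(E_i) = 1$, then $k_i$ is injective and the commutative local ring $R_i$ is a V-ring, hence a field. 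Otherwise $\ell(E_i) = 2$, so $\mathfrak m_i^2 \cdot E_i \subseteq \mathfrak m_i \cdot k_i = 0$, and since $E_i$ is a faithful cogenerator over $R_i$, $\mathfrak m_i^2 = 0$. For any $e \in E_i \setminus k_i$, the submodule $\mathfrak m_i e$ is a nonzero submodule of $\mathrm{Soc}(E_i) = k_i$, so $\mathfrak m_i e = k_i$ and $R_i e = E_i$. Hence the map $r \mapsto re$ is a surjection $R_i \twoheadrightarrow E_i$ with kernel $\mathrm{Ann}_{R_i}(e) = \mathrm{Ann}_{R_i}(E_i) = 0$, yielding $R_i \cong E_i$ of length~$2$ with simple (hence principal) maximal ideal, as required.

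For $(2) \Rightarrow (3)$, it suffices to show every $R$-module has the form $S \oplus I$ with $S$ semisimple and $I$ injective, because condition~(2) of Theorem~\ref{mainC1} is readily verified under our hypothesis and then yields $M \in \mathcal C _1$. Any $R$-module decomposes as $M = \bigoplus_i M_i$ via the central idempotents of $R = \prod R_i$, so I handle each $M_i$ separately. When $R_i$ is a field, $M_i$ is semisimple. When $R_i$ is a length-$2$ PIR it is self-injective (quasi-Frobenius); I extract a maximal injective direct summand $R_i^{(\beta)}$ of $M_i$, writing $M_i = R_i^{(\beta)} \oplus M_i'$. Since $R_i$ is the only indecomposable injective $R_i$-module and $M_i'$ contains no nonzero injective submodule, each $m \in M_i'$ must have $R_i m \not\cong R_i$, which over a length-$2$ ring with $\mathfrak m_i = (x)$ forces $xm = 0$; hence $M_i'$ is semisimple, and the desired decomposition follows.

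The main technical obstacle is the length-$2$ local case in $(1) \Rightarrow (2)$: upgrading $\ell(E_i) = 2$ to both $\ell(R_i) = 2$ and principality of $\mathfrak m_i$. The approach above bypasses the need to invoke Matlis duality by establishing the isomorphism $R_i \cong E_i$ directly, through cyclic generation of $E_i$ by any non-socle element combined with the faithfulness of the injective cogenerator $E_i$.
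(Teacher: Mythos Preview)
Your proof is correct and follows the same $(3) \Rightarrow (1) \Rightarrow (2) \Rightarrow (3)$ cycle as the paper, but the two nontrivial implications are argued differently. For $(1) \Rightarrow (2)$ the paper, having reduced to a local $R_i$ with $J(R_i)=\mathrm{Soc}(R_i)$, derives a contradiction when $\mathrm{Soc}(R_i)$ has more than one simple summand by producing two distinct length-$2$ cyclic quotients of $R_i$ both isomorphic to $E(k_i)$ (impossible over a commutative ring since their annihilators would have to agree); you instead construct an explicit isomorphism $R_i \cong E_i$ from any $e \in E_i \setminus k_i$, which is more direct and yields $\ell(R_i)=2$ and principality of $\mathfrak m_i$ simultaneously. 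For $(2) \Rightarrow (3)$ the paper invokes the finite representation type of each $R_i$ (indecomposables $R_i$ and $R_i/\mathrm{Soc}(R_i)$); your argument is self-contained, splitting off a maximal free (= injective) summand of $M_i$ and showing the complement is killed by $\mathfrak m_i$. One point worth making explicit is that the ``maximal injective direct summand $R_i^{(\beta)}$'' exists by Zorn's lemma applied to independent families of submodules of $M_i$ isomorphic to $R_i$: their direct sum is injective over the noetherian ring $R_i$, hence splits off, and maximality forces the complement to contain no copy of $R_i$.
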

\begin{proof} (1) $\Rightarrow$ (2): Since each commutative artinian ring is a finite direct product of local rings \cite[p.312]{AF}, in view of Theorem \ref{mainC1}, we can assume that $R$ is a local artinian ring such that each uniform module is either simple, or injective of composition length $2$. Let $S$ denote the simple module. Then $\mbox{Soc}(R) = \bigoplus_{j < n} I_j$ is the unique maximal ideal of $R$, and $I_j \cong S$ for each $j < n$. If $n > 1$, then the locality of $R$ implies that $R/\bigoplus_{0 < j < n} I_j$ and  $R/\bigoplus_{j < n - 1} I_j$ are uniform modules of length $2$, so they are both isomorphic to $E(S)$. Since $R$ is commutative, necessarily $\bigoplus_{0 < j < n} I_j = \bigoplus_{j < n - 1} I_j$, a contradiction. This proves that $n = 1$, that is, $R$ is a principal ideal ring of length $\leq 2$.

(2) $\Rightarrow$ (3) follows by Theorem \ref{mainC1}, since each $R_i$ is of finite representation type, and if $R_i$ is not a field, then $R_i$ has just two representatives of indecomposable $R_i$-modules: the injective one, $R_i$, and the simple one, $R_i/\mbox{Soc}(R_i)$. The implication (3) $\Rightarrow$ (1) is obvious.
\end{proof}

By Theorem \ref{comC1}, the commutative rings $R$ such that $\mathcal C _1$ is preenveloping are necessarily of finite representation type. This is not the case in general. Our final example shows that there is no bound on the representation type even for hereditary artin algebras $R$ such that $\mathcal C _1$ is preenveloping: $R$ can be of finite, tame or wild type:

\begin{exm} \label{UT2KL} Let $K \subseteq F$ be skew-fields and $R = UT_2(F,K)$ the subring of $M_2(F)$ consisting of the matrices
$\begin{pmatrix}
f_1&f_2\\0&k
\end{pmatrix}$
with $f_1, f_2 \in F$ and $k \in K$ (so $R = UT_2(K)$ is the ring from Example \ref{UT2} in the particular case of $K = F$). Assume that $d = \dim F_K$ is finite. Then $R$ is right artinian and left and right hereditary (and $R$ is left artinian, iff $\dim {_K}F < \infty$). By the right-hand version of \cite[III.2.1]{ARS}, the category $\mbox{Mod-}R$ is equivalent to the category $\mathcal C$ of triples: right $R$-modules correspond to the triples $(A,B,f) \in \mathcal C$ such that $A$ is a right $F$-module, $B$ a right $K$-module, and $f \in \mbox{Hom}_K(A,B)$, while right $R$-homomorphisms correspond to the maps between triples $(A,B,f) \in \mathcal C$ and $(A^\prime,B^\prime,f^\prime) \in \mathcal C$ defined as the pairs $(\alpha, \beta)$ such that $\alpha \in \mbox{Hom}_F(A,A^\prime)$, $\beta \in \mbox{Hom}_K(B,B^\prime)$, and $\beta f = f^\prime \alpha$.

In this correspondence, indecomposable injective modules correspond to the triples $(F,0,0)$ and $(H,K,g)$, where $H$ is the right $F$-module consisting of all right $K$-homomorphisms from $F$ to $K$ (so $H \cong F$, because $d < \infty$), and $g : F \otimes_F H \to K$ is the right $K$-homomorphism defined by $g(f \otimes h) = h(f)$, see \cite[II.2.5.(c)]{ARS}. While the module corresponding to $(F,0,0)$ is simple, we have the exact sequence $0 \to (0,K,0) \to  (H,K,g) \to (H,0,0) \to 0$  (cf.\ \cite[II.2.3]{ARS}), which shows that the injective module corresponding to its middle term has length 2 (it has a simple socle corresponding to $(0,K,0)$, and a simple top corresponding to  $(F,0,0)$). By Theorem \ref{mainC1}, the class of all $C_1$-modules is preenveloping. However, if $d > 1$, then $\mathcal C _1 \neq \hbox{Mod-}R$ (as $P \notin \mathcal C _1$, where $P$ is the indecomposable projective module which is not simple).

Finally, by \cite[Theorem on pp.2-3]{DR}, the ring $R$ is of finite representation type for $d \leq 3$, it is of tame type for $d = 4$, and it is wild for $d \geq 5$.
\end{exm}

\medskip

\end{document}